\documentclass{amsart}

\usepackage{amsmath,amsthm,amssymb,amscd,enumerate,mathtools,enumitem,tikz-cd,bm}
\usepackage{amsfonts}
\usepackage{rotating}
\usepackage{euscript}
\usepackage{pst-node}
\usepackage{epsfig,verbatim}

\def\be{\begin{equation}}
\def\ee{\end{equation}}

\def\C{{\mathbb C}} 
\def\f{\EuScript}
 
\def\P{{\mathbb P}}
\def\Z{{\mathbb Z}}

\def\phi{{\varphi}}
\def\v{{\varepsilon}} 

\def\deg{{\rm deg\,}}

\def\Aut{{\rm Aut}}
\def\Mon{{\rm Mon}}
\def\mult{{\rm mult}}

\def\bp{\begin{proposition}}
\def\ep{\end{proposition}}

\def\bt{\begin{theorem}}
\def\et{\end{theorem}}
\def\br{\begin{remark}}
\def\er{\end{remark}}
\def\be{\begin{equation}}
\def\bee{\begin{equation*}}
\def\l{\label}

\def\ee{\end{equation}}
\def\eee{\end{equation*}}
\def\bl{\begin{lemma}}
\def\el{\end{lemma}}
\def\bc{\begin{corollary}}
\def\ec{\end{corollary}}
\def\pr{\noindent{\it Proof. }}

\def\bd{\begin{definition}}
\def\ed{\end{definition}}
\def\t{\widetilde}

\def\hat{\widehat}

\def\t{\widetilde }

\newtheorem{theorem}{Theorem}[section]
\newtheorem{lemma}[theorem]{Lemma}
\newtheorem{definition}[theorem]{Definition}
\newtheorem{corollary}[theorem]{Corollary}
\newtheorem{proposition}[theorem]{Proposition}
\newtheorem{problem}[theorem]{Problem}

\theoremstyle{definition}

\theoremstyle{definition}
\newtheorem{remark}[theorem]{Remark}
\def\bpr{\begin{problem}}
\def\epr{\end{problem}}

\mathtoolsset{showonlyrefs}

\begin{document}

\title[]{On intersections of fields of rational functions
}

\author[F. Pakovich]{Fedor Pakovich}
\thanks{
This research was supported by ISF Grant No. 1092/22} 
\address{Department of Mathematics, 
Ben Gurion University of the Negev, P.O.B. 653, Beer Sheva,  8410501, Israel}
\email{pakovich@math.bgu.ac.il}

\begin{abstract} Let $X$ and $Y$ be rational functions of degree at least two with complex coefficients such that
$\mathbb{C}(X,Y)=\mathbb{C}(z)$.
We study the problem of determining when the field extension
$[\mathbb{C}(z):\mathbb{C}(X)\cap\mathbb{C}(Y)]$
is finite and attains the minimal possible degree $\deg X\cdot\deg Y$.
We give a complete characterization in the case where $X$ is a Galois covering.
We also establish several related results concerning the functional equation 
$
A \circ X = Y \circ B
$ 
in rational functions, in the case where one of the functions involved is a Galois covering. 
Finally, we consider an analogous problem for  holomorphic maps between compact Riemann surfaces.

\end{abstract}

\maketitle

\section{Introduction} 
This paper is concerned with the following problem: given rational functions $X$ and $Y$ of degree at least two with complex coefficients such that
\begin{equation}\label{us1}
\mathbb{C}(X,Y)=\mathbb{C}(z),
\end{equation}
under what conditions 
\begin{equation}\label{per}
\mathbb{C}(X)\cap\mathbb{C}(Y)\neq \mathbb{C}\; ?
\end{equation}
By the L\"uroth theorem,  this condition
is equivalent to the existence of a non-constant rational function $H$ such that
\begin{equation}\label{rav}
\mathbb{C}(X)\cap\mathbb{C}(Y) = \mathbb{C}(H),
\end{equation}
which in turn is equivalent to the existence of non-constant rational functions $A$ and $B$ satisfying
\begin{equation}\label{h}
A\circ X = B\circ Y.
\end{equation} 
Thus,  the problem under consideration can be viewed as a special case of the more general problem of describing solutions of \eqref{h} in rational functions. 

The last problem is connected with many areas of mathematics and has been studied intensively (see, e.g., the recent paper \cite{plo} and the references therein). Nevertheless, it remains largely open.  
Furthermore, known approaches to equation \eqref{h} focus on the study of the ``separated variable'' curves
\begin{equation}\label{cur}
A(x) - B(y) = 0,
\end{equation}
where $A$ and $B$ are rational functions, 
and on conditions ensuring that such a curve has a factor of genus zero. However, these conditions usually do not directly yield any information about the rational functions $X$ and $Y$ parametrizing \eqref{cur} or its components. Consequently, very little is known about general conditions under which \eqref{per} holds, except in the case when both $X$ and $Y$ are polynomials.

In the last case, condition \eqref{per} implies that \eqref{rav} holds for some \emph{polynomial} $H$. On the other hand, polynomial solutions of \eqref{h} are completely described by Ritt's theory of polynomial decompositions (see \cite{r1}, \cite{sch}). 
These results imply that polynomials $X$ and $Y$ satisfy conditions \eqref{us1}  and \eqref{rav} if and only if there exist polynomials of degree one $\sigma_1$, $\sigma_2$, and $\mu$ such that, up to a possible interchange of $X$ and $Y$, either
\be \l{fde0} 
X = \sigma_1 \circ z^n \circ \mu, \quad Y = \sigma_2 \circ z^s R(z^n) \circ \mu,
\ee   
where $R$ is a polynomial, $n \ge 1$, $s \ge 0$, and $\gcd(s,n) = 1$, or
\[
X = \sigma_1 \circ T_n \circ \mu, \quad Y = \sigma_2 \circ T_m \circ \mu,
\]  
where $T_n$ and $T_m$ are Chebyshev polynomials, $n,m \ge 1$, and $\gcd(n,m) = 1$. 

Furthermore, the corresponding polynomials $A$ and $B$ such that the equality 
\begin{equation}\label{hh}
H = A \circ X = B \circ Y
\end{equation}
holds are given, respectively, by
\begin{equation}\label{fde}
A = \nu \circ z^s R^n(z) \circ \sigma_1^{-1}, \quad B = \nu \circ z^n \circ \sigma_2^{-1},
\end{equation}
or
\[
A = \nu \circ T_m \circ \sigma_1^{-1}, \quad B = \nu \circ T_n \circ \sigma_2^{-1},
\]  
where $\nu$ is a polynomial of degree one. In particular, this description shows that, in the polynomial case, conditions \eqref{us1} and \eqref{per} imply the equality 
\begin{equation}\label{us2}
[\mathbb{C}(z) : \mathbb{C}(X) \cap \mathbb{C}(Y)] = \deg X \cdot \deg Y.
\end{equation}

Note  that some results concerning equation \eqref{h} with $X$ and $Y$ being polynomials are known in settings where $A$ and $B$ are allowed to belong to broader classes of functions. Specifically, in \cite{lys}, a description of the solutions of \eqref{h} was obtained in the case where $A$ and $B$ are transcendental meromorphic functions in $\mathbb{C}$ (see also \cite{hs} for certain conditions implying that such functions do not exist).  
In \cite{pcon}, equation \eqref{h} was solved under the assumption that $A$ and $B$ are continuous functions on $\mathbb{C}\mathbb{P}^1$. Finally, in \cite{form}, a description of the solutions of \eqref{h} was obtained in the case where $A$, $B$,  and $X$, $Y$ are formal power series of order at least two, that is, series of the form
$
\sum_{n \ge 2} c_n z^n.
$

For non-polynomial rational functions $X$ and $Y$, 
condition \eqref{per} can sometimes be decided 
in the presence of symmetries. Specifically, let $G_X$ denote the finite group 
of deck transformations $\Aut(\mathbb{C}\mathbb{P}^1, X)$ associated with $X$. 
Then \eqref{per} implies that $\langle G_X, G_Y \rangle$ is a subgroup of $G_H$, 
and hence finite. Thus, finiteness of $\langle G_X, G_Y \rangle$ is a necessary condition 
for \eqref{per}.  
Moreover, if both $X$ and $Y$ are Galois coverings, then \eqref{rav} holds if and only if 
$\langle G_X, G_Y \rangle$ is finite, in which case \eqref{per} holds for some Galois covering $H$ 
with $G_H = \langle G_X, G_Y \rangle$.
In particular, it applies to the case where $X$ and $Y$ are of degree two, 
since any rational function of degree two is a Galois covering. 

Considering only rational functions of degree two, we already see that \eqref{per} does not generally imply \eqref{us2} in the non-polynomial case.
Indeed, since the dihedral group of any order can be generated by two involutions, 
the left-hand side of \eqref{us2} can be arbitrarily large, 
even for $X$ and $Y$ of degree two.  
Although these examples are somewhat specialized, it is unclear whether, for $X$ and $Y$ satisfying \eqref{us1} and \eqref{per}, there exist any general upper bounds for the degree 
$
[\mathbb{C}(z) : \mathbb{C}(X) \cap \mathbb{C}(Y)].
$ 
Thus, 
 a natural first step toward the general problem is to focus on the case in which
this quantity attains its minimal possible value, or equivalently, when \eqref{rav} holds for some rational function $H$
of degree $\deg X \cdot \deg Y$.

In this paper, we describe rational functions $X$ and $Y$ for which
conditions~\eqref{us1} and~\eqref{us2} are satisfied in the case where {\it one} of them
is a Galois covering. As noted above, certain solutions arise when $X$ and $Y$
are both Galois coverings.  
However, this class does not exhaust all possible solutions.
Indeed, let $X$ be a Galois covering and let $Y$ be a $G_X$-equivariant rational function, that is, a rational function
satisfying
\[
Y \circ \mu = \mu \circ Y, \qquad \mu \in G_X.
\]
Since such a function $Y$ maps every $X$-fiber to an $X$-fiber, there exists a
rational function $A$ such that the diagram
\begin{equation}
\begin{CD}
\mathbb{C}\mathbb{P}^1 @>Y>> \mathbb{C}\mathbb{P}^1 \\
@V X VV @V X VV \\
\mathbb{C}\mathbb{P}^1 @>A>> \mathbb{C}\mathbb{P}^1
\end{CD}
\end{equation}
commutes, providing a solution $A$ and $B=X$ of equation \eqref{h}. 
Since for any finite subgroup \(G \subset \mathrm{Aut}(\mathbb{C}\mathbb{P}^1)\) the set of \(G\)-equivariant rational functions is nonempty and admits a rather explicit description obtained in~\cite{dm}, this provides a broad additional family of examples of rational functions \(X\) and \(Y\), where \(X\) is a Galois covering, satisfying \eqref{us1} and \eqref{us2}.

More generally, for a rational function $V$,  the diagram 
\begin{equation}\label{d}
\begin{CD}
\mathbb{C}\mathbb{P}^1 @>V>> \mathbb{C}\mathbb{P}^1 \\
@V X VV @V X VV \\
\mathbb{C}\mathbb{P}^1 @>C>> \mathbb{C}\mathbb{P}^1
\end{CD}
\end{equation}
 commutes for some rational function $C$ whenever
there exists an automorphism
\be \l{au} 
\phi \colon G_X \to G_X
\ee
such that
\begin{equation}\label{cond}
V \circ \mu = \phi(\mu) \circ V, \qquad \mu \in G_X,
\end{equation}
and one can verify that condition \eqref{cond} implies condition \eqref{us1}. 
By abuse of notation, we will also refer to rational functions satisfying
\eqref{cond} for some automorphism \eqref{au} as $G_X$-equivariant.

Finally, the above two mechanisms can be combined by completing diagram~\eqref{d}, where
$V$ is a $G_X$-equivariant rational function, to the diagram
\begin{equation}
\begin{CD}
\mathbb{C}\mathbb{P}^1 @>V>> \mathbb{C}\mathbb{P}^1 @>U>> \mathbb{C}\mathbb{P}^1 \\
@VV X V @VV X V @VV B V \\
\mathbb{C}\mathbb{P}^1 @>C>> \mathbb{C}\mathbb{P}^1 @>D>> \mathbb{C}\mathbb{P}^1,
\end{CD}
\end{equation}
where $U$ is a rational Galois covering such that the group $\langle G_X, G_U \rangle$  is finite  of order $\deg X \cdot \deg U$, with $G_X\cap G_U=\{e\}.$ 

Our first  result shows that all rational functions $Y$ satisfying
\eqref{us1} and \eqref{us2} in the case where $X$ is a Galois covering can be
obtained by the construction described above.

\begin{theorem}\label{t1}
Let $X$ and $Y$ be rational functions of degree at least two satisfying
$
\mathbb{C}(X,Y) = \mathbb{C}(z)
$. Assume that $X$ is a Galois covering. Then the equality \[
[\mathbb{C}(z) : \mathbb{C}(X) \cap \mathbb{C}(Y)]
= \deg X \cdot \deg Y
\]
holds if and only if  $Y$ admits a factorization $Y = U \circ V$, where $V$ is a
$G_X$-equivariant rational function and $U$ is a rational Galois covering
such that the group
$
\langle G_X, G_U \rangle
$
is finite of order $\deg X \cdot \deg U$, with $G_X\cap G_U=\{e\}.$ 
\end{theorem}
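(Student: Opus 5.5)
We argue in both directions, using throughout the correspondence between subfields of $\mathbb{C}(z)$ and right factors, together with the fact that for a Galois covering $X$ the field $\mathbb{C}(X)$ is the fixed field of $G_X$.

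\emph{Sufficiency.} Assume $Y=U\circ V$ with $V$ being $G_X$-equivariant for an automorphism $\phi$ as in \eqref{cond}, and $U$ Galois with $G:=\langle G_X,G_U\rangle$ finite of order $\deg X\cdot\deg U$ and $G_X\cap G_U=\{e\}$. Let $W$ be a Galois covering with $G_W=G$. Then $W=D'\circ X=B\circ U$ for suitable $D',B$, so $\mathbb{C}(X)\cap\mathbb{C}(U)\supseteq\mathbb{C}(W)$. This field has index $|G|=\deg X\deg U$, which is the minimum possible, since a common subfield has index divisible by both degrees and the compositum is $\mathbb{C}(z)$. Indeed $\mathbb{C}(X,U)$ is the fixed field of $G_X\cap G_U=\{e\}$.

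Next, by \eqref{d} we have $X\circ V=C\circ X$ with $\deg C=\deg V$. Then
\[
W\circ V=D'\circ X\circ V=D'\circ C\circ X\in\mathbb{C}(X)
\qquad\text{and}\qquad
W\circ V=B\circ U\circ V=B\circ Y\in\mathbb{C}(Y).
\]
Hence $\mathbb{C}(X)\cap\mathbb{C}(Y)$ contains a function of degree $\deg X\deg U\deg V=\deg X\deg Y$. It remains to show that $\mathbb{C}(X,Y)=\mathbb{C}(z)$; the minimality argument above then gives equality \eqref{us2}. If $\mathbb{C}(X,Y)=\mathbb{C}(T)$, then $X=X_1\circ T$ and $Y=Y_1\circ T$. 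The group $G_T$ is a subgroup of $G_X$, and $Y=U\circ V$ is invariant under it. Because $\phi$ is an automorphism, $V$ maps $G_T$-orbits to $\phi(G_T)$-orbits bijectively on generic fibres. So $U$ would have to be invariant under $\phi(G_T)\subseteq G_X$. Since $G_U\cap G_X=\{e\}$, this forces $G_T$ to be trivial.

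\emph{Necessity.} Let $\mathbb{C}(X)\cap\mathbb{C}(Y)=\mathbb{C}(H)$ with $H=A\circ X=B\circ Y$ and $\deg H=\deg X\deg Y$. Then $\deg A=\deg Y$, and each $G_X$-orbit is sent by $Y$ into a single $B$-fibre. The natural candidate for $V$ is the \emph{largest right factor} of $Y$ through which the $G_X$-action descends. Concretely, let $\Gamma=\{\gamma\in\Aut(\mathbb{C}\mathbb{P}^1)\}$ be the group generated by the elements acting on fibres of $H$ that are compatible with both factorizations. A cleaner route is via monodromy. Put $\Omega=H^{-1}(t)$, of size $nm$ with $n=\deg X$ and $m=\deg Y$. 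The $X$-blocks and the $Y$-blocks then form a "grid": every $X$-block meets every $Y$-block in exactly one point. This follows from the degree count together with $\mathbb{C}(X,Y)=\mathbb{C}(z)$, since two points lying in both the same $X$-block and the same $Y$-block generically coincide.

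The group $G_X$ acts on $\Omega$ preserving the $X$-blocks. Through the grid bijection it therefore acts on the set of $Y$-blocks, i.e.\ on the fibre $B^{-1}(t)$. I would then consider the kernel and the image of this induced action and show the following:
\begin{enumerate}
\item $Y$ factors as $U\circ V$, where $V$ is the quotient corresponding to the block system obtained by gluing points whose $G_X$-translates lie in the same $Y$-blocks.
\item $V$ is $G_X$-equivariant, with $\phi$ induced by conjugation.
\item $U$ is Galois, with $G_U$ isomorphic to a complement of $G_X$ in a group of order $\deg X\deg U$.
\end{enumerate}

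The main obstacle is making this construction rigorous. One has to show that the deck transformations of $X$ normalize enough structure for $Y$ to split as (Galois) $\circ$ (equivariant), and in particular that the relevant Möbius transformations exist on $\mathbb{C}\mathbb{P}^1$ rather than being merely permutations of fibres. My first attempt would use the finite group $G_H$-like object $\langle G_X,\ \text{lift of } G_A\rangle$ acting on $\mathbb{C}\mathbb{P}^1$, and analyse the orbit-stabilizer relations coming from the exact degree equality.
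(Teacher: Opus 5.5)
Your sufficiency argument is correct, and it is a more hands-on version of the paper's, which obtains the ``if'' part from Theorems~\ref{phi} and~\ref{2.6} together with Lemmas~\ref{eqi} and~\ref{sum+}. The element $W\circ V=D'\circ C\circ X=B\circ Y$ lies in both fields and has degree $\deg X\cdot\deg Y$. Two small points. First, $\mathbb{C}(X,Y)=\mathbb{C}(z)$ is a standing hypothesis of the theorem, so your last step there is superfluous. Second, the bound $[\mathbb{C}(z):\mathbb{C}(X)\cap\mathbb{C}(Y)]\ge\deg X\cdot\deg Y$ does not follow from divisibility by both degrees, which only gives the lcm. It follows instead from $[\mathbb{C}(X)\mathbb{C}(Y):\mathbb{C}(X)]\le[\mathbb{C}(Y):\mathbb{C}(X)\cap\mathbb{C}(Y)]$, or from $\deg H=\deg X\cdot\deg A\ge\deg X\cdot\deg Y$ as in Lemma~\ref{eqi}.

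The necessity direction, however, is not proved. Items (1)--(3) are only announced, and you say yourself that making them rigorous is the main obstacle. The step before them is also incorrect as stated: the grid does not give an action of $G_X$ on $B^{-1}(t)$. Each $X$-block $\beta$ yields its own bijection with the set of $Y$-blocks. The permutation sending $\gamma$ to the $Y$-block containing $\mu(\beta\cap\gamma)$ is independent of $\beta$ only if $Y\circ\mu=\psi_\mu\circ Y$ for some M\"obius $\psi_\mu$, and this fails in general. For example, take $G_X\cong(\mathbb{Z}/2\mathbb{Z})^2$ and $G_U\cong\mathbb{Z}/3\mathbb{Z}$ inside a tetrahedral group $A_4\subset\Aut(\mathbb{CP}^1)$, with $V=z$ and $Y=U$; no nontrivial element of $G_X$ normalizes $G_U$.

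The missing idea is what the paper extracts from the lifting lemma (Lemma~\ref{lift}). Since $\widetilde X=X$, the map $Y$ factors through the normalization of $B$: $Y=U\circ V$ with $V\colon\mathbb{CP}^1\to\f N_B$ and $\widetilde B=B\circ U$. This is where genuine automorphisms come from. Theorem~\ref{phi} applied to $\widetilde B\circ V=A\circ X$ gives a homomorphism $\phi\colon G_X\to G_{\widetilde B}$ with $V\circ\mu=\phi(\mu)\circ V$. Writing $\phi(G_X)=G_W$, Lemma~\ref{sum+} splits the good solution into two good solutions:
\begin{itemize}
\item $X,V,C,W$, so $\phi$ is an isomorphism onto $G_W$;
\item $W,U,D,B$, so Theorem~\ref{2.6} gives $G_W\cap G_U=\{e\}$ and $|\langle G_W,G_U\rangle|=\deg W\cdot\deg U$.
\end{itemize}
Finally, $\f N_B$ has genus zero and isomorphic finite M\"obius groups are conjugate. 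Hence $W=\nu\circ X\circ\eta$, and $G_W$ can be replaced by $G_X$.

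Your candidate for $V$ is in fact the right one. Gluing points whose $G_X$-translates lie in the same $Y$-blocks corresponds to the field $L=\mathbb{C}(Y\circ\mu:\ \mu\in G_X)$. This field is $G_X$-stable, so by L\"uroth $L=\mathbb{C}(V)$ with $V\circ\mu=\phi(\mu)\circ V$ for M\"obius $\phi(\mu)$. That removes your ``permutations versus M\"obius maps'' worry. What remains is to show that $L$ is the Galois closure of $\mathbb{C}(Y)/\mathbb{C}(H)$. The stabilizer of an $X$-block in the monodromy group of $H$ acts on $\mathbb{C}(z)$ through $G_X$, because $X$ is Galois. By your grid it also acts transitively on the $Y$-blocks. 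Hence the $G_X$-translates of $\mathbb{C}(Y)$ exhaust its conjugates over $\mathbb{C}(H)$. This is precisely the lifting statement. Once it is established, $U$ is Galois and $\phi(G_X)$ is a complement of $G_U$ in $G_{\widetilde B}$, which gives your (1)--(3).
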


Notice that in the case where \(Y\) is indecomposable, that is, cannot be represented as a composition of two rational functions of degree at least two, Theorem~\ref{t1} implies that \(Y\) is either \(G_X\)-equivariant or a Galois covering. However, once condition~\eqref{us2} is weakened to~\eqref{per}, the latter conclusion no longer holds in general (see Section~\ref{ex}).

If condition \eqref{per} holds for rational functions \(X\) and \(Y\), and 
\[X = X_1 \circ X_2 \circ \dots \circ X_l, \quad Y = Y_1 \circ Y_2 \circ \dots \circ Y_k\] are decompositions of $X$ and \(Y\) into indecomposable rational functions of degree at least two, then \eqref{per}  remains true for the rightmost factors $X_l$ and $Y_k$. Consequently, the case where $X$ and $Y$ are indecomposable, along with the case when \eqref{us2} holds, can be regarded as a particularly interesting case of the problem of describing rational functions satisfying \eqref{us1} and \eqref{per}. On the other hand, it follows from the classification of rational Galois coverings going back to Klein that such a covering $X$ is indecomposable if and only if it has the form $X = \mu_1 \circ z^p \circ \mu_2$, where $p$ is prime and $\mu_1,\mu_2$ are M\"obius transformations.

For the Galois coverings $X = z^n$, $n \ge 2$, our second result provides 
a condition—distinct from~\eqref{us2}—which ensures that an indecomposable rational 
function $Y$ satisfying \eqref{us1} and \eqref{per} is $G_X$-equivariant. 
For such $X$, the group $G_X$ is cyclic, generated by $z \mapsto \v z$, where $\v$ is a primitive $n$th root of unity, and the $G_X$-equivariance of $Y$ takes an explicit form that generalizes \eqref{fde0}. Namely, $Y$ is $G_X$-equivariant if and only if 
$
Y = \sigma \circ z^s R(z^n),
$
where \(\sigma\) is a Möbius transformation, \(R\) is a rational function, and \(s \ge 0\) is an integer such that \(\gcd(s,n) = 1\).

\bt \l{t2} 
Let $Y$ be an indecomposable rational function of degree at least two that has a zero of order at least two at $0$, and let $n \ge 2$ be an integer. Then the conditions 
$$\mathbb{C}(z^n,Y) = \mathbb{C}(z)
\quad \text{and} \quad
\mathbb{C}(z^n) \cap \mathbb{C}(Y) \neq \mathbb{C}
$$ 
hold 
if and only if
$
Y = \sigma \circ z^s R(z^n),
$
where $\sigma$ is a M\"obius transformation, $R$ is a rational function, and $s \ge 0$ is an integer such that $\gcd(s,n) = 1$. 
\et

The
problem of describing rational functions satisfying \eqref{us1} and \eqref{per} can be formulated in a more general setting as the problem of the existence of a {\it cofibered product} of two holomorphic maps
$
X: \f E \rightarrow \f R $ and $ Y: \f E \rightarrow \f T
$
 between compact Riemann surfaces 
satisfying
\be \l{sa}
X^* M(\f R) \cdot Y^* M(\f T) = M(\f E),
\ee
where \(M(\f R)\) denotes the field of meromorphic functions on a compact Riemann surface \(\f R\). 
In this setting, the question is to determine conditions under which
\be \l{sa2} 
X^* M(\f R) \cap Y^* M(\f T) \neq \mathbb{C}.
\ee 
Equivalently, one asks whether there exist holomorphic maps between compact Riemann surfaces 
$
A:\f  R \rightarrow \f C$ and $ \f T \rightarrow \f C
$
such that the diagram
\be
\begin{CD}
\f E @>Y>> \f T\\
@VV {X} V @VV B V\\
\f R @>A >> \f C
\end{CD}
\ee
 commutes. 
For some results in this direction, we refer the reader to \cite{ac} and \cite{bt}.

We show that an analogue of Theorem~\ref{t1} remains valid in this setting,
provided that the notion of an equivariant map is suitably modified.
Let
$V : \f E \longrightarrow \f S$
be a holomorphic map between compact Riemann surfaces, and let
$G \leq \Aut(\f E)$
be a subgroup of automorphisms of $\f E$.
We say that $V$ is $G$-equivariant if there exists an injective homomorphism
$$\phi_V : G \longrightarrow \Aut(\f S)$$ such that
\[
V \circ \mu = \phi_V(\mu) \circ V, \quad \mu \in G.
\]
For a holomorphic map $X : \f E \rightarrow \f R$, we keep the notation
$G_X$ for the group of deck transformations $\Aut(\f E,X)$.

In this notation, we prove the following result.

\begin{theorem}\l{t4} Let 
$
X: \f E \rightarrow \f R$ and $Y: \f E \rightarrow \f T
$
be 
holomorphic maps  between compact Riemann surfaces of degree at least two 
satisfying 
\[
X^* M(\f R) \cdot Y^* M(\f T) = M(\f E). 
\]
Assume that $X$ is a Galois covering.
Then the equality
\[
[ M(\f E) : X^* M(\f R) \cap Y^* M(\f T)] = \deg X \cdot \deg Y
\]
holds if and only if $Y$ admits a factorization
$
Y = U \circ V,
$
as a composition of holomorphic maps  between compact Riemann surfaces 
$
V: \f E \rightarrow \f S$ and $  U: \f S \rightarrow \f T
$, \linebreak
where $V$ is $G_X$-equivariant and $U$ is a Galois covering such that the group \linebreak
$
\langle \phi_V(G_X), G_U \rangle
$
is finite of order $\deg X \cdot \deg U$, with $\phi_V(G_X) \cap G_U = \{e\}$.

\end{theorem}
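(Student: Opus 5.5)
We pass to Galois theory of function fields. Put $K=M(\f E)$, $F=X^*M(\f R)$, $L=Y^*M(\f T)$ and $H=F\cap L$; write $n=\deg X$, $m=\deg Y$. The hypothesis says $K=FL$, and $K/F$ is Galois with group $G_X$. Holomorphic maps between compact Riemann surfaces correspond to subfields of finite codimension, so every statement about factorizations of $Y$ becomes a statement about fields between $L$ and $K$.

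\emph{Lower bound, used in both directions.} Since $K=FL$, any basis of $F/H$ spans $K$ over $L$, so $m=[K:L]\le [F:H]$. Hence $[K:H]=[K:F][F:H]\ge nm$ always. The theorem is therefore about when this minimum is attained.

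\emph{Sufficiency.} Suppose $Y=U\circ V$ with $V$ equivariant via $\phi_V$, $k=\deg U$, and $\Gamma'=\langle \phi_V(G_X),G_U\rangle$ of order $nk$.
\begin{itemize}
\item Let $\pi:\f S\to \f S/\Gamma'=\f C$ be the quotient map. The composition $\pi\circ V$ has degree $nk\deg V=nm$.
\item Since $\phi_V(G_X)\subseteq\Gamma'$, the map $\pi\circ V$ is constant on $G_X$-orbits, so it factors through $X$ as $A\circ X$.
\item Since $G_U\subseteq\Gamma'$, the map $\pi$ factors through $U$, so $\pi\circ V$ factors through $Y$ as $B\circ Y$.
\end{itemize}
Thus $(\pi\circ V)^*M(\f C)\subseteq H$ has codimension $nm$, which gives $[K:H]\le nm$. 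Together with the lower bound, $[K:H]=nm$.

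\emph{Necessity: setup.} Let $N$ be the Galois closure of $K/H$ and $\Gamma=\mathrm{Gal}(N/H)$. Let $\Gamma_K\subseteq\Gamma_F,\Gamma_L$ be the subgroups fixing $K$, $F$, $L$.
\begin{itemize}
\item Because $K/F$ is Galois, $\Gamma_K\trianglelefteq\Gamma_F$ and $\Gamma_F/\Gamma_K\cong G_X$.
\item The equality $K=FL$ gives $\Gamma_F\cap\Gamma_L=\Gamma_K$.
\item Then $|\Gamma_F\Gamma_L|=|\Gamma_F||\Gamma_L|/|\Gamma_K|=nm|\Gamma_K|=|\Gamma|$ by the degree assumption. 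Hence $\Gamma=\Gamma_F\Gamma_L$.
\end{itemize}
In this language, a factorization $Y=U\circ V$ is a subgroup $\Gamma_K\subseteq\Gamma_S\subseteq\Gamma_L$. The map $V$ is $G_X$-equivariant with injective $\phi_V$ when $\Gamma_F$ normalizes $\Gamma_S$ and $\Gamma_F\cap\Gamma_S=\Gamma_K$. The map $U$ is Galois when $\Gamma_S\trianglelefteq\Gamma_L$.

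\emph{Necessity: choosing $\Gamma_S$.} Let $C=\mathrm{core}_\Gamma(\Gamma_L)$. Because $\Gamma=\Gamma_F\Gamma_L$, the $\Gamma$-conjugates of $\Gamma_L$ are exactly its $\Gamma_F$-conjugates. I would take $\Gamma_S=\Gamma_K C$.
\begin{itemize}
\item $\Gamma_S$ is a subgroup because $C$ is normal in $\Gamma$.
\item It lies in $\Gamma_L$ and contains $\Gamma_K$.
\item It is normalized by $\Gamma_F$, since $\Gamma_F$ normalizes both $\Gamma_K$ and $C$.
\item $\Gamma_S\cap\Gamma_F\subseteq\Gamma_L\cap\Gamma_F=\Gamma_K$, so $\phi_V$ is injective.
\end{itemize}

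The main obstacle is to show $\Gamma_S\trianglelefteq\Gamma_L$; equivalently, that $\Gamma_L$ normalizes $\Gamma_K$ modulo $C$. First I would test whether $\Gamma_K C=\Gamma_L\cap N_\Gamma(\Gamma_K)C$, using Dedekind's rule $N_\Gamma(\Gamma_K)=\Gamma_F\,(N_\Gamma(\Gamma_K)\cap\Gamma_L)$, which holds because $\Gamma_F\subseteq N_\Gamma(\Gamma_K)$. If that fails, I would instead define $\Gamma_S$ as the intersection of $\Gamma_L$ with the normal closure of $\Gamma_K$ in $\Gamma_F\Gamma_K C$. I would then verify the three properties above directly, using the exact factorization $\Gamma=\Gamma_F\Gamma_L$.

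\emph{Final count.} With $\Gamma_S$ in hand, the group $\langle\phi_V(G_X),G_U\rangle$ is the image of $\Gamma_F\Gamma_L=\Gamma$ acting on $\Gamma/\Gamma_S$, that is, $\Gamma/C$. Its order is $[\Gamma:\Gamma_S]\,/\,[K:S]=nm/\deg V=n\deg U$. The images of $\Gamma_F$ and $\Gamma_L$ in it meet in the image of $\Gamma_F\cap\Gamma_L\,C$, and this image is trivial. This gives $\phi_V(G_X)\cap G_U=\{e\}$.
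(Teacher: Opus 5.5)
Your setup is sound, and your route is genuinely different from the paper's. The exact factorization $\Gamma=\Gamma_F\Gamma_L$ with $\Gamma_F\cap\Gamma_L=\Gamma_K$ is right, as is the dictionary between factorizations of $Y$ and subgroups $\Gamma_K\subseteq\Gamma_S\subseteq\Gamma_L$. The sufficiency direction (quotient by $\Gamma'$ plus the lower bound $[K:H]\ge nm$) is complete. The paper instead passes through good solutions (Lemma~\ref{eqi}), the lifting lemma (Lemma~\ref{lift}), Theorem~\ref{phi}, Lemma~\ref{sum+} and Theorem~\ref{2.6}.

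However, your necessity direction is not finished. You never prove $\Gamma_S\trianglelefteq\Gamma_L$, which is exactly what makes $U$ Galois; it is left as a plan (``first I would test \dots, if that fails \dots''). Your final count also silently relies on the missing fact. The equality $|\Gamma/C|=[\Gamma:\Gamma_S]$ requires $\Gamma_S=C$. The image of $(\Gamma_F\cap\Gamma_L)C=\Gamma_K C$ in $\Gamma/C$ is trivial only if $\Gamma_K\subseteq C$. Separately, $[\Gamma:\Gamma_S]=[S:H]$ is already $nm/\deg V$, so the extra division by $[K:S]$ is a slip.

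The missing step is one line and uses only facts you already wrote down. For $f\in\Gamma_F$, normality of $\Gamma_K$ in $\Gamma_F$ gives $\Gamma_K=f\Gamma_K f^{-1}\subseteq f\Gamma_L f^{-1}$. Since $\Gamma=\Gamma_F\Gamma_L$, every $\Gamma$-conjugate of $\Gamma_L$ has this form, so $\Gamma_K\subseteq C$; this is where the degree hypothesis really enters. Hence your candidate is simply $\Gamma_S=\Gamma_K C=C$, which is normal in all of $\Gamma$, and no fallback construction is needed.

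With this, the rest goes through:
\begin{itemize}
\item $S$ is the Galois closure of $L/H$, i.e.\ $\f S=\f N_B$ and $\t B=B\circ U$, exactly as in Theorem~\ref{t6}.
\item $\phi_V(G_X)$ and $G_U$ are the images of $\Gamma_F$ and $\Gamma_L$ in $\Gamma/C$, and they generate $\Gamma/C$, of order $[S:H]=n\deg U$.
\item They intersect in $(\Gamma_F C\cap\Gamma_L)/C=\Gamma_K C/C=\{e\}$.
\end{itemize}
The inclusion $\Gamma_K\subseteq C$ says that $K$ contains the Galois closure of $L/H$, which is precisely what the lifting lemma delivers in this setting. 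Once you add it, your argument is a self-contained and shorter proof than the paper's. Your sufficiency argument also never uses the hypothesis $\phi_V(G_X)\cap G_U=\{e\}$, which is in fact forced by $K=FL$.
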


Finally, we prove the following simple criterion ensuring that holomorphic maps satisfying \eqref{sa} do not have a cofibred product.

\begin{theorem}\l{t5}
Let 
$
X: \f E \rightarrow \f R$ and $Y: \f E \rightarrow \f T
$
be 
holomorphic maps  between compact Riemann surfaces of degree at least two such that
\[
X^* M(\f R) \cdot Y^* M(\f T) = M(\f E).
\]
Assume that there exists a point $z_0 \in \f E$ such that
\[
\gcd(\mult_{z_0} X, \mult_{z_0} Y) > 1.
\]
Then
\[
X^* M(\f R) \cap Y^* M(\f T) = \mathbb{C}.
\]
\end{theorem}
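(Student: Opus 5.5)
We argue by contradiction. Suppose that
\[
X^* M(\f R) \cap Y^* M(\f T) \neq \mathbb{C}.
\]
Then there exist a compact Riemann surface $\f C$ and non-constant holomorphic maps $A:\f R\to\f C$, $B:\f T\to\f C$ with $A\circ X=B\circ Y=:H$. The plan is to compare the local structure of $X$ and $Y$ at $z_0$ with the local structure of the fiber product of $A$ and $B$. The key input is that condition \eqref{sa} forces $\f E$ to be the normalization of an irreducible component of $\f R\times_{\f C}\f T$.

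\textbf{Step 1: $\f E$ is a component of the normalized fiber product.} Consider the holomorphic map $(X,Y):\f E\to \f R\times\f T$. Since $A\circ X=B\circ Y$, its image $\Gamma$ is an irreducible curve contained in the fiber product
\[
\{(r,t): A(r)=B(t)\}.
\]
The function field of $\Gamma$ is the compositum $X^*M(\f R)\cdot Y^*M(\f T)$, which equals $M(\f E)$ by \eqref{sa}. Hence $\f E\to\Gamma$ is birational, so $\f E$ is the normalization of $\Gamma$. In particular, the points of $\f E$ lying over a point $(r_0,t_0)\in\Gamma$ correspond bijectively to the local analytic branches of $\Gamma$ at $(r_0,t_0)$. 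Moreover, the multiplicities of $X$ and $Y$ at such a point equal the local degrees of the two coordinate projections restricted to the corresponding branch.

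\textbf{Step 2: local computation.} Set $r_0=X(z_0)$, $t_0=Y(z_0)$, $a=\mult_{z_0}X$, $b=\mult_{z_0}Y$, $\alpha=\mult_{r_0}A$ and $\beta=\mult_{t_0}B$. Comparing multiplicities of $H$ at $z_0$ gives $\alpha a=\beta b$. Choose local coordinates $u$ at $r_0$, $v$ at $t_0$ and $w$ at $H(z_0)$ in which $A$ is $u\mapsto u^\alpha$ and $B$ is $v\mapsto v^\beta$. Near $(r_0,t_0)$ the fiber product is then the germ $u^\alpha=v^\beta$. With $d=\gcd(\alpha,\beta)$, this germ splits into $d$ branches
\[
u^{\alpha/d}=\zeta v^{\beta/d},\qquad \zeta^d=1 .
\]
Each branch is smooth after normalization and is parametrized by $u=t^{\beta/d}$, $v=\zeta' t^{\alpha/d}$. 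Thus on each branch the two projections have local degrees $\beta/d$ and $\alpha/d$.

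\textbf{Step 3: conclusion.} The germ of $\Gamma$ at $(r_0,t_0)$ is a union of some of these branches, and $z_0$ corresponds to one of them. By Step 1, $a=\beta/d$ and $b=\alpha/d$, which are coprime. This contradicts $\gcd(a,b)>1$, so the intersection must be $\mathbb{C}$.

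The main obstacle is Step 1: justifying rigorously that points of $\f E$ over $(r_0,t_0)$ correspond to local branches of $\Gamma$, and that multiplicities are read off from the branch parametrizations. This is standard for normalization of curves: the normalization is locally the disjoint union of the normalizations of the analytic branches. The only genuinely needed input is birationality of $\f E\to\Gamma$, which is exactly condition \eqref{sa}.
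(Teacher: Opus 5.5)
Your proof is correct and follows the paper's argument. Both proofs use \eqref{sa} to make $\{\f E,X,Y\}$ a component of the fiber product of $A$ and $B$, and then read off the multiplicities at $z_0$; the paper cites Abhyankar's lemma, $\mult_{z_0}H=\operatorname{lcm}(\alpha,\beta)$, while you verify that local fact directly by splitting the germ $u^\alpha=v^\beta$ into its $\gcd(\alpha,\beta)$ branches, which yields the same coprime multiplicities $\beta/d$ and $\alpha/d$.
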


Note that 
Theorem~\ref{t5} 
implies in particular that for polynomials $X$ and $Y$ 
whose degrees are not coprime, the conditions~\eqref{us1} and~\eqref{per} 
cannot hold simultaneously—a conclusion that also follows from the description 
of polynomial solutions to~\eqref{h} given above.

This paper is organized as follows. In Section~2, we begin with the necessary 
definitions and results concerning fiber products and normalizations of 
holomorphic maps between compact Riemann surfaces. We then relate the problem 
under consideration to the notion of a good solution of the functional equation 
\eqref{h} in holomorphic maps between compact Riemann surfaces, i.e., a solution 
for which \eqref{sa} holds and the fiber product of $A$ and $B$ consists of a single 
component. Next, we prove a series of results describing good solutions of \eqref{h} 
in the case where $X$ is a Galois covering. The main result of this section is 
Theorem~\ref{t6}; Theorem~\ref{t4} is a corollary of this result. Finally, 
we deduce Theorem~\ref{t5} from Abhyankar's lemma. 

In Section~3, we first deduce Theorem~\ref{t1} from Theorem~\ref{t4}. We also prove  
a result of independent interest describing good solutions of \eqref{h} in rational functions 
in the case where, instead of $X$, the map $B$ is a Galois covering (Theorem~\ref{t8}). 
Roughly speaking, it states that all such solutions reduce to the case $B = X$, which expresses the semiconjugacy relation between rational functions.  
We then prove Theorem~\ref{t2}. For this purpose, we employ entirely different methods based on local arguments, 
adapting certain results from~\cite{form} on functional equations in formal power series. 
Specifically, we consider groups of local symmetries associated with an analytic function 
with a superattracting fixed point,  which serve as deck transformation groups in this context. 
Finally, we discuss a class of examples of rational functions $X$ and $Y$ with $\deg X = 2$ that satisfy \eqref{us1} and \eqref{per} but not \eqref{us2}, and for which the conclusion of Theorem~\ref{t1} does not hold.

\section{Cofibred products of holomorpic map
}
\subsection{\l{fib} Fiber products and normalizations} 
In this paper, all Riemann surfaces under consideration are assumed to be compact. 
For brevity, we shall usually use the expression ``holomorphic map''  to mean a holomorphic map between compact Riemann surfaces.

Let $\f R$, $\f T$, and $\f C$ be compact Riemann surfaces, and $A:\, \f R\rightarrow \f C$ and $B:\, \f T \rightarrow \f C$  holomorphic maps. 
The collection
\be \l{nota} (\f R,A)\times_{\f C} (\f T,B)=\bigcup\limits_{j=1}^{n(A,B)}\{\f E_j,X_j,Y_j\},\ee 
where $n(A,B)$ is an integer positive number and $\f E_j,$ $1\leq j \leq n(A,B),$ are compact Riemann surfaces provided with holomorphic maps
$$X_j:\, \f E_j\rightarrow \f R, \ \ \ Y_j:\, \f E_j\rightarrow \f T, \ \ \ 1\leq j \leq n(A,B),$$
is called the {\it fiber product} of  $A$ and $B$ if $$ A\circ X_j=B\circ Y_j, \ \ \ 1\leq j \leq n(A,B),$$ 
and for any holomorphic maps $X:\, \t{\f E}\rightarrow \f R,$  $Y:\, \t{\f E}\rightarrow \f T$
 satisfying 
\be \l{pes}  A\circ X=B\circ Y\ee there exist a uniquely defined  index $j$, $1\leq j \leq n(A,B)$, and 
a holomorphic map $T:\, \t{\f E}\rightarrow \f E_j$ such that the eqaulities 
\be \l{ae} X= X_j\circ  T, \ \ \ Y= Y_j\circ T\ee  
hold. 

The fiber product  is defined in a unique way up to natural isomorphisms and 
can be described by the following algebro-geometric construction. Let us consider the algebraic variety 
\be \l{ccuurr} \f L=\{(x,y)\in \f R\times \f T \, \vert \,  A(x)=B(y)\}.\ee
Let us denote by $\f L_j,$ $1\leq j \leq n(A,B)$,  irreducible components of $\f L$, by 
$\f E_j$, \linebreak  $1\leq j \leq n(A,B)$, their desingularizations, 
 and by $$\pi_j: \f E_j\rightarrow \f L_j, \ \ \ 1\leq j \leq n(A,B),$$ the desingularization maps.
Then the compositions  $$x\circ \pi_j: \f E_j\rightarrow \f R, \ \ \ y\circ \pi_j: \f E_j\rightarrow \f T, \ \ \ 1\leq j \leq n(A,B),$$ 
extend to holomorphic maps
$$X_j:\, \f E_j\rightarrow \f R, \ \ \ Y_j:\, \f E_j\rightarrow \f T, \ \ \ 1\leq j \leq n(A,B),$$
and the collection $\bigcup\limits_{j=1}^{n(A,B)}\{\f E_j,X_j,Y_j\}$ is the fiber product of $A$ and $B$. 
Abusing notation we call the Riemann  surfaces $\f E_j$,  $1\leq j \leq n(A,B),$ irreducible components of the fiber product of $A$ and $B$.

It follows from the definition  that for every $j,$ $1\leq j \leq n(A,B),$ the functions $X_j,Y_j$  have no {\it non-trivial common compositional  right factor} in the following sense: 
the equalities 
$$ X_j= \widehat X\circ  T, \ \ \ Y_j= \widehat Y\circ T,$$ where $$T:\, {\f E}_j \rightarrow \widehat{\f E} , \ \ \ \widehat X:\, \widehat{\f E} \rightarrow \f R, \ \ \ \widehat Y:\,  \widehat{\f E}  \rightarrow \f T$$ are holomorphic maps, imply that $\deg T=1.$  
Denoting by $\f M(\f R)$ the field of meromorphic functions on a compact Riemann surface $\f R$, we can restate  this  condition as the equality
$$ X_j^*\f M(\f R)\cdot Y_j^*\f M(\f T)=\f M(\f E_j),$$ meaning that the field $\f M(\f E_j)$ is the compositum  of its subfields $X_j^*\f M(\f R)$ and $Y_j^*\f M(\f T).$  
In the other direction, if $X$ and $Y$ satisfy \eqref{pes} and have no non-trivial common compositional  right factor, then   
 $$X=X_j\circ T, \ \ \ \ Y=Y_j\circ T$$ for some $j$, $1\leq j \leq n(A,B),$ and an  isomorphism $T:\, \f E_j\rightarrow \f E_j.$  

Notice that since $X_i,Y_i$, $1\leq j \leq n(A,B),$  parametrize components of   
\eqref{ccuurr}, the equalities    
\be \l{ii} \sum_{j}\deg X_j=  \deg B,\ \ \ \ \sum_{j}\deg Y_j= \deg A\ee
hold. In particular, if $(\f R,A)\times_{\f C} (\f T,B)$ consists of a single component $\{\f E,X,Y\},$ then \be \l{vv} \deg X=\deg B,\ \ \ \ \deg Y=\deg A.\ee Vice versa,   if holomorphic maps $Y$ and $X$ satisfy \eqref{pes}  and \eqref{vv}, and have no non-trivial common compositional  right factor,  then $(\f R,A)\times_{\f C} (\f T,B)$ consists of a single component.

Let $X:\, \f E\rightarrow \f R$ be a holomorphic map. 
Let us recall that $X$ is called a \textit{Galois covering} if its deck transformation group
\[
\operatorname{Aut}(\f E,X)=\{\mu \in \operatorname{Aut}(\mathcal{E}) \mid X\circ\mu = X\},
\]
which we denote by $G_X$, acts transitively on the fibers of $X$. 
Equivalently, $X$ is a Galois covering if the field extension 
$ M(\f  E)/X^* M(\f R)$ is a Galois extension. 
In case $X$ is a Galois covering, 
for the corresponding Galois group the isomorphism  
$$ \Aut(\f E,X)\cong{\rm Gal}\left(M(\f E)/X^*M(\f R)\right)$$ 
holds. 

Note that a holomorphic map \(X\) is a Galois covering if and only if
\be \l{dega} \vert G_X\vert = \deg X. \ee
Note also that for any subgroup \(G \subseteq G_X\), there exist holomorphic maps \(T\) and \(\widehat X\) such that \(T\) is a Galois covering with  \( G_T=G\), and $X$ factors as \(X = \widehat X \circ T\). Conversely, every decomposition of \(X\) into a composition of holomorphic maps arises in this manner.

Let $X:\, \f E\rightarrow \f R$  be  an arbitrary holomorphic map.  Then the {\it normalization} of $X$ is defined as a compact Riemann surface  $\f N_X$ together with a Galois covering  of the 
lowest possible degree $\t X:\f N_X\rightarrow \f R$ such that
 $\t X=X\circ H$ for some  holomorphic map $H:\,\f N_X\rightarrow \f E$. 
The map $\t X$ is defined up to the change $\t X\rightarrow \t X \circ \alpha,$ where $\alpha\in\Aut(\f N_X)$, and is 
characterized by the property that  the field extension 
$\f M(\f N_X)/{\t X}^*\f M(\f R)$ is isomorphic to the Galois closure $\t{\f M(\f E)}/X^*\f M(\f R)$
of the extension $\f M(\f E)/X^*\f M(\f R)$.

\subsection{\l{2.2} Good solutions of $A \circ X = B \circ Y$} 
For studying the functional equation
\begin{equation}\label{fe}
A \circ X = B \circ Y
\end{equation}
in holomorphic maps, 
which we will usually write in the form of the commutative diagram
\be \l{bell} \begin{CD}
\f E @>Y>> \f T \\
@VV X V @VV B V \\
\f R @>A>> \f C\,,
\end{CD}
\ee
it is convenient to use the following concept.   
Let $X, Y, A, B$ be holomorphic maps satisfying \eqref{fe}. 
We say that $X, Y, A, B$ form a \emph{good solution} of~\eqref{fe} if 
the fiber product of $A$ and $B$ has a unique component, and $X$ and $Y$ have no nontrivial common compositional right factor.

For a good solution $X, Y, A, B$, the equalities~\eqref{vv} hold. Furthermore, the above properties of fiber products easily imply the following statement, which provides a criterion for a solution to be good.

\begin{lemma} \l{21}
A solution $X, Y, A, B$ of~\eqref{fe} is good whenever any two of the following
three conditions are satisfied:
\begin{itemize}
    \item The fiber product of $A$ and $B$ has a unique component,
    \item The maps $X$ and $Y$ have no non-trivial common compositional right factor,
    \item The equalities $\deg B = \deg X$, $\deg A = \deg Y$ hold. \qed
\end{itemize}
\end{lemma}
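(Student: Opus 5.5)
The plan is to check the three pairs of conditions separately, using only the facts about fiber products recalled in Subsection~\ref{fib}. The key tool is the universal property. If $X,Y$ satisfy \eqref{pes}, then $X=X_j\circ T$ and $Y=Y_j\circ T$ for some component $\{\f E_j,X_j,Y_j\}$ of $(\f R,A)\times_{\f C}(\f T,B)$ and some holomorphic map $T$. We combine this with the degree identities \eqref{ii}.

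\emph{First and second conditions.} This is the definition of a good solution, so nothing needs to be shown.

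\emph{First and third conditions.} Assume the fiber product has a single component $\{\f E_1,X_1,Y_1\}$. Then \eqref{vv} gives $\deg X_1=\deg B$. By the universal property, $X=X_1\circ T$, so $\deg X=\deg X_1\cdot \deg T=\deg B\cdot\deg T$. The hypothesis $\deg X=\deg B$ then forces $\deg T=1$, so $T$ is an isomorphism. Hence $X,Y$ coincide with $X_1,Y_1$ up to an isomorphism of $\f E$. Since $X_1,Y_1$ have no non-trivial common compositional right factor, neither do $X,Y$: any common right factor of $X,Y$ transports through $T^{-1}$ to a common right factor of $X_1,Y_1$ of the same degree.

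\emph{Second and third conditions.} Assume $X,Y$ have no non-trivial common compositional right factor. By the statement recalled after the definition of the fiber product, $X=X_j\circ T$ and $Y=Y_j\circ T$ with $T$ an isomorphism, for some $j$. Hence $\deg X_j=\deg X=\deg B$. On the other hand, \eqref{ii} gives $\sum_i \deg X_i=\deg B$, and every $\deg X_i\ge 1$. So $j$ is the only index, that is, $n(A,B)=1$. (The last sentence before the definition of Galois coverings states exactly this converse.)

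None of the steps is a real obstacle. The only point needing a little care is that an isomorphism $T$ preserves the property of having no non-trivial common right factor. This follows because composing with $T^{\pm1}$ carries a factorization of one pair to a factorization of the other with the same degree of the right factor.
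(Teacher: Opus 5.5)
Your proof is correct and follows the route the paper intends. The paper gives no separate argument, only remarking that the lemma follows easily from the properties of fiber products recalled in Subsection~\ref{fib}; your case analysis unpacks exactly those properties (the universal property, the absence of common right factors for the components, and the degree identities \eqref{ii}), and, as you implicitly note, only the equality $\deg X=\deg B$ from the third condition is needed.
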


The problem considered in this paper is related to the concept of a good solution in the following way.

\bl \l{eqi} 
Let $X: \f E \rightarrow \f R$ and $Y: \f E \rightarrow \f T$ be holomorphic maps  between compact Riemann surfaces  of degree at least two  such that 
\be \label{u1}
X^* M(\f R) \cdot Y^* M(\f T) = M(\f E).
\ee
 Then the condition
\be \label{u2}
[ M(\f E) : X^* M(\f R) \cap Y^* M(\f T)] = \deg X \cdot \deg Y
\ee
holds if and only if there exist holomorphic maps  between compact Riemann surfaces $A$ and $B$ such that $X$, $Y$, $A$,  $B$ form a good solution of the equation $A \circ X = B \circ Y.$
\el
\pr Indeed, the condition
$$
X^* M(\f R) \cap Y^* M(\f T) \neq \mathbb{C}
$$
is equivalent to the existence of holomorphic maps
$A: \f R \rightarrow \f C$, $B:\f T \rightarrow \f C$, and $H:\f E \rightarrow \f C$ such that
\be\l{ind2}
X^* M(\f R) \cap Y^* M(\f T) = H^*M(\f C).
\ee
and
\be \l{ind} 
H = A \circ X = B \circ Y
\ee 
Thus, $\{\f E,X,Y\}$ is a component of the fiber product of $A$ and $B$ by \eqref{u1}, and 
\be
[ M(\f E) : X^* M(\f R) \cap Y^* M(\f T)] = \deg H = \deg X \cdot \deg A \ge
\deg X \cdot \deg Y
\ee
by \eqref{ii}. Moreover, the equality holds if and only if the conditions \eqref{vv} are satisfied, which by Lemma~\ref{21} is equivalent to $X$, $Y$, $A$, $B$ forming a good solution.  \qed

The main technical statement we need is the following ``lifting lemma'' 
(see \cite{plo}, Lemma~3.4).

\bl \l{lift}
 Let $X, Y, A, B$ be holomorphic maps  between compact Riemann surfaces  of degree at least two  that form a good solution of the equation $A \circ X = B \circ Y$. Then the diagram 
\be \l{d1} 
\begin{CD}
\f E @>Y>> \f T\\
@VV X V @VV B V \\
\f R @>A >> \f C
\end{CD}
\ee
 can be completed  to a diagram of holomorphic maps  between compact Riemann surfaces 
\be \l{d2} 
\begin{CD}
\f N_X @>L >> \f N_B\\
@VV Q V @VV F V \\
\f E @>Y>> \f T\\
@VV X V @VV B V \\
\f R @>A >> \f C,
\end{CD}
\ee
such that $X\circ Q=\t X$    
 and $B\circ F= \t B$. \qed
\el

Finally,  we will use the following result, which follows from the universal property of fiber products (see Theorem~2.8 in \cite{aol} for more details).

\bl \l{sum+}
Assume that the diagram of holomorphic maps  between compact Riemann surfaces 
\be \l{ef}
\begin{CD}
\f C @>V>> \f S @>U>> \f T \\
@VV X V @VV W V @VV B V \\
\f R @>C>> \f F @>D>> \f C
\end{CD}
\ee
commutes.
Then $X$, $U\circ V$, $D\circ C$, $B$ is a good solution of \eqref{fe} if and only if
$X, V, C, W$ and $W, U, D, B$ are good solutions of \eqref{fe}. \qed
\el

\subsection{ Good solutions of $A \circ X = B \circ Y$ with $X$ being a Galois covering}

Let $X:\, \f E \rightarrow \f R$, $Y:\, \f E \rightarrow \f T$, and $B:\, \f T \rightarrow \f C$ be holomorphic maps. 
Then a holomorphic map $A:\, \f R \rightarrow \f C$ making the diagram \eqref{bell} commutative exists if and only if $Y$ maps every $X$-fiber to a $B$-fiber.  
If both $X$ and $B$ are Galois covers, an obvious sufficient condition for this is the existence of a homomorphism
\be \l{ho1} 
\phi : G_X \longrightarrow G_B
\ee 
such that
\be \l{ho2}
Y \circ \mu = \phi(\mu) \circ Y, 
\quad  \mu \in G_X.
\ee 

The following result shows that this condition is also necessary and relates good solutions  to equivariant maps. It can be considered as a generalization of Theorem~5.1 in \cite{semi} and Theorem~5.1 in \cite{hur}.

\bt \l{phi} Let $X, Y, A, B$ be holomorphic maps  between compact Riemann surfaces  of degree at least two  such that
the diagram
$$
\begin{CD}
\f E @>Y>> \f T \\
@VV X V @VV B V \\
\f R @>A>> \f C
\end{CD}
$$ 
commutes. 
Assume that $X$ and $B$ are Galois coverings. 
Then there exists a homomorphism
\[
\phi : G_X \longrightarrow G_B
\]
 such that
\[
V \circ \mu = \phi(\mu) \circ V, 
\quad  \mu \in G_X.
\]
Moreover, \[
X^* \f M(\f R) \cdot Y^* \f M(\f T) = \f M(\f E)
\]
if and only if $\phi$ is injective. Finally,  $\phi$ is an isomorphism 
if and only if 
$X, Y, A, B$ form a good solution of the equation $A \circ X = B \circ Y$. 
\et
\pr 
Clearly, equality \eqref{fe} implies that 
for every $\mu\in G_X$ the equality  \be \l{as} B\circ (Y\circ \mu)=B\circ Y \ee holds. On the other hand, for the fiber product of $B$ with itself, the 
functions $X_j,Y_j$ in \eqref{nota} 
are 
$$X_j=\mu_j, \ \ \ Y_i=id, \ \ \ \mu_j\in G_B.$$ Indeed, clearly,  $X_j,Y_j$ defined in this way  satisfy \eqref{pes} and have no non-trivial common compositional  right factor. Moreover, since 
$$\sum_j\deg \mu_i=\vert G_B\vert =\deg B,$$ by \eqref{dega}, these functions    exhaust all  $X_j,Y_j$ in \eqref{nota} by 
\eqref{ii}. Thus, the universality property
\be
Y\circ \mu = X_j\circ T, \quad Y = Y_i\circ T
\ee
for the solutions $Y$ and $Y\circ\mu$ of \eqref{as} reduces to the equality
\be \l{hhh}
Y\circ\mu = \phi(\mu)\circ Y
\ee
for some $\phi(\mu) \in X_B$. Finally, one can easily see that the correspondence $\mu \mapsto \phi(\mu)$ defines a homomorphism of groups.

To prove the second statement of the theorem, let us observe that, since $X$ is a Galois covering, if the equalities 
\be \l{ew}
X = \widehat{X} \circ T, \quad Y = \widehat{Y} \circ T,
\ee
hold 
for some holomorphic maps, then the first of these equalities implies that $T$ is also  a Galois covering and that $G_T$ is a subgroup of $G_X$. Moreover, the second equality implies that the kernel of $\phi$ contains $G_T$.
It follows readily that the equalities~\eqref{ew} hold for some holomorphic map $T$ of degree at least two if and only if the kernel of $\phi$ is non-trivial.

Finally, assuming that $\phi$ is injective, the equality 
$\deg X = \deg B$, which ensures that $X, Y, A, B$ form a good solution, for Galois coverings $X$ and $B$ 
is equivalent to the equality $G_X = G_B$, which in turn is equivalent to the surjectivity of $\phi$.
\qed

The following simple result provides conditions for two Galois covers to have a cofibred product. 

\bt \l{2.6} 
Let $X: \f E \rightarrow \f R$ and $Y: \f E \rightarrow \f T$ be holomorphic maps between compact Riemann surfaces of degree at least two.   
Assume that $X$ and $Y$ are Galois coverings. Then the condition
\be \l{be} 
X^* M(\f R) \cap Y^* M(\f T) = H^* M(\f C)
\ee
holds for some non-constant holomorphic map $H : \f E \rightarrow \f C$ 
if and only if the group $\langle G_X, G_Y \rangle$ is finite, in which case it holds for some Galois covering $H$ with \linebreak $G_H = \langle G_X, G_Y \rangle$.
Furthermore, the conditions
\be 
X^* M(\f R) \cdot Y^* M(\f T) = M(\f E)
\ee
and
\be \l{wi} 
[ M(\f E) : X^* M(\f R) \cap Y^* M(\f T)] = \deg X \cdot \deg Y
\ee
hold if and only if the conditions
\be \l{fu}
G_X \cap G_Y = \{e\}
\quad \text{and} \quad
|\langle G_X, G_Y \rangle| = \deg X \cdot \deg Y
\ee
hold.
\et

\pr Let us set $G = \langle G_X, G_Y \rangle$. 
If there exist holomorphic maps $A$, $B$, and $H$ satisfying \eqref{ind}, then $G$ is clearly a subgroup of $\Aut(\f E, H)$ and therefore finite. Moreover, $\deg H \ge |G|$.  On the other hand, if $G$ is finite, then for the quotient map $H$ corresponding to $G$ one can find holomorphic maps $A$ and $B$ such that \eqref{ind} holds.
This proves the first part of the theorem.

Furthermore, it follows from the description of decompositions of Galois coverings that $X$ and $Y$ have no common nontrivial compositional right factor if and only if $G_X \cap G_Y = \{e\}$. Finally, since
\[
\deg H = |G| \ge \frac{|G_X|\,|G_Y|}{|G_X \cap G_Y|} = \frac{\deg X \cdot \deg Y}{|G_X \cap G_Y|},
\]
if the first equality in \eqref{fu} holds, then the second equality is equivalent to to the equality $\deg H = \deg X \cdot \deg Y$, 
and, by \eqref{be}, this is in turn equivalent to \eqref{wi}. \qed

By Lemma~\ref{eqi}, the conditions \eqref{u1}, \eqref{u2} are equivalent to the existence of holomorphic maps $A$ and $B$ such that $X$, $Y$, $A$, $B$ form a good solution of \eqref{fe}. Hence, Theorem~\ref{t4} follows from the following statement, which is the main result of this section.

\bt \l{t6} Let $X, Y, A, B$ be holomorphic maps  between compact Riemann surfaces  of degree at least two such that
 $X$ is a Galois covering. 
Then $X, Y, A, B$ form a good solution of the equation $A \circ X = B \circ Y$
if and only if there exists a commutative diagram of holomorphic maps
\[
\begin{CD}
\f E @>V>> \f S @>U>> \f T \\
@VV X V @VV W V @VV B V \\
\f R @>C>> \f F @>D>> \f C
\end{CD}
\]
such that
$
Y = U \circ V,$ $ A = D \circ C,$ 
and the following conditions  are satisfied:

\begin{enumerate}  \renewcommand{\labelenumi}{\textup{(\arabic{enumi})}}
 \item The equalities $$\f S=\f N_B \quad \text{and} \quad\t B=B\circ U$$ hold. In particular, the maps $W$ 
and $U$ are Galois coverings.

    \item There exists an isomorphism
    \[
    \phi \colon G_X \longrightarrow G_W
    \]
    such that
    \begin{equation}
    V \circ \mu = \phi(\mu) \circ V, \qquad \mu \in G_X. 
    \end{equation}
    \item The groups  $G_W$ and $ G_U$ satisfy the conditions 
$$G_W \cap G_U = \{e\}  \quad \text{and} \quad
\vert \langle G_W, G_U \rangle \vert =\deg W \cdot \deg U.$$  

\end{enumerate}
\et 
\pr 
If $X$, $Y$, $A$, $B$ form a good solution of \eqref{fe}, then by Lemma~\ref{lift},  taking into account that $\t X = X$ because $X$ is a Galois cover, 
the map $Y$ can be decomposed  into a composition of holomorphic maps as $Y = U \circ V$, 
so that the diagram
\[
\begin{tikzcd}[row sep=2.75em, column sep=2.75em]
& \f N_B \arrow[d,"U"] \\
\f E \arrow[r,"Y"] \arrow[d,"X"'] \arrow[ur,"V"] & \f T \arrow[d,"B"] \\
\f R \arrow[r,"A"'] & \f C
\end{tikzcd}
\]
commutes and the equality $\t B=B\circ U$ holds.

 Since 
the diagram 
\[
\begin{CD}
\f E @>V>> \f N_B \\
@VV X V @VV \t B V \\
\f R @>A>> \f C
\end{CD}
\]
commutes, by Theorem \ref{phi} there exists a homorphism \eqref{ho1} satisfying \eqref{ho2}. The image of this homomorphism is a subgroup of $G_{\t B}$ and therefore has the form $G_W$ for some Galois covering $W$ such that $\t B = D \circ W$ for a suitable holomorphic map $D$. Furthermore, since $V$ maps every $X$-fiber to a $W$-fiber, there exist 
a holomorphic map $C$ such that the diagram 
\[
\begin{CD}
\f E @>V>> \f N_B \\
@VV X V @VV W V \\
\f R @>C>> \f F
\end{CD}
\]
commutes. 

The commutativity of the last diagarm combined with the equalities 
$$\t B=D\circ W=B\circ U$$ implies that the diagram 
\[
\begin{CD}
\f E @>V>> \f N_B @>U>> \f T \\
@VV X V @VV W V @VV B V \\
\f R @>C>> \f F @>D>> \f C
\end{CD}
\]
commutes, and since 
$$D\circ C\circ X=B\circ U\circ V=B\circ Y=A\circ X,$$ 
the equality 
$A=D\circ C$ holds. 

Since $X, Y, A, B$ is a good solution of \eqref{fe}, it follows from  
Lemma \ref{sum+} that  
$X, V, C, W$ and $W, U, D, B$ also are good solutions of \eqref{fe}. Applying now Theorem \ref{phi} to the good solution $X, V, C, W$, we conclude that 
$V$ is $G_X$-equivaraint and $\phi_V(G_X)=G_W$. Finally, since 
$W, U, D, B$ is a good solution, the   
conditions \eqref{fu} hold by Theorem \ref{2.6}. 

The ``if'' part of the theorem follows easily from Theorems~\ref{phi} and~\ref{2.6} together with Lemmas~\ref{eqi}  and \ref{sum+}.
\qed

As we mentioned in the introduction, for any finite subgroup $G \subset \mathrm{Aut}(\mathbb{C}\mathbb{P}^1)$ the set of $G$-equivariant rational functions is nonempty and has been described in~\cite{dm}. For an arbitrary Riemann surface $\f E$ and a subgroup $G$ of $\Aut(\f E)$ we do not know any criterion for the existence of $G$-equivariant maps. Nevertheless, the following simple construction shows that there are many pairs $\f E, G\subseteq \Aut(\f E)$ admitting such maps, at least when $G = \Z/2\Z$.

Let $A \colon \f R \to \mathbb{C}\mathbb{P}^1$ be any holomorphic map of odd degree.  
Then one easily sees that the fiber product 
of $A$ with the rational function $z^2$ consists of a single component $\{\f E, X, Y\}$.  
Moreover, since $\deg X = 2$, the map $X$ is a Galois covering.  
Applying Theorem~\ref{t6} to the diagram \eqref{bell} and noting that $\t B=B = z^2$, we obtain that $Y$ is $G_X$-equivariant.

For a holomorphic map $R:\f E\rightarrow \f C$ and a point $z_0\in \f E$, we denote by $\mult_{z_0}R$ the multiplicity of $R$ at $z_0$.
\vskip 0.2cm
\noindent{\it Proof of Theorem \ref{t5}.}    
Assume that there exist holomorphic maps $A$, $B$, and $H$ such that 
 equalities \eqref{ind2} and \eqref{ind} 
hold. 
Then the  assumption
\be 
X^* M(\f R) \cdot Y^* M(\f T) = M(\f E)
\ee
implies that $\{\f E, X,Y\}$ is a component of the fiber product of $A$ and $B$. Therefore, by Abhyankar's lemma (see e.g.~\cite{sti}, Theorem 3.9.1), the equality 
\be \l{abj} \mult_{t_0}H = \operatorname{lcm}\bigl(\mult_{X(t_0)}A,\; \mult_{Y(t_0)}B\bigr) \ee
holds for every point $t_0$ of $\f E$.  
In turn, the last equality implies that the numbers
\[
\mult_{t_0}X = \frac{\mult_{t_0}H}{\mult_{X(t_0)}A}
\qquad\text{and}\qquad
\mult_{t_0}Y = \frac{\mult_{t_0}H}{\mult_{Y(t_0)}B}
\]
are coprime. \qed

\section{Cofibred products of rational functions
}

\subsection{Good solutions of $A\circ X = B\circ Y$ involving a Galois covering}
In the case where the holomorphic maps $X, Y, A, B$ are rational functions, Theorem~\ref{t6} reduces to the following statement.

\bt \l{t7} Let $X, Y, A, B$ be rational functions  of degree at least two such that
 $X$ is a Galois covering. 
Then $X, Y, A, B$ form a good solution of the equation $A \circ X = B \circ Y$
if and only if there exists a commutative  diagram of rational functions 
\[
\begin{CD}
\C\P^1 @>V>> \C\P^1 @>U>> \C\P^1 \\
@VV X V @VV X V @VV B V \\
\C\P^1 @>C>> \C\P^1 @>D>> \C\P^1
\end{CD}
\]
such that
$
Y = U \circ V,$ $ A = D \circ C,$ 
and the following conditions  are satisfied:

\begin{enumerate}  \renewcommand{\labelenumi}{\textup{(\arabic{enumi})}}
 \item The equalities $$\f N_B =\C\P^1\quad \text{and} \quad\t B=B\circ U$$ hold. In particular, the map $U$ is a Galois covering.

    \item There exists an automorphism
    \[
    \phi \colon G_X \longrightarrow G_X
    \]
    such that
    \begin{equation}
    V \circ \mu = \phi(\mu) \circ V, \qquad \mu \in G_X. 
    \end{equation}
    \item The groups  $G_X$ and $ G_U$ satisfy the conditions 
$$G_X \cap G_U = \{e\}  \quad \text{and} \quad
\vert \langle G_X, G_U \rangle \vert =\deg X \cdot \deg U.$$  

\end{enumerate}
\et 
\pr 
Applying Theorem~\ref{t6} and using the fact that $\f C$ has genus zero, we see that 
$\f S$ also has genus zero. Hence $W$ is a rational Galois covering. Moreover, since 
$G_X$ and $G_W$ are isomorphic, there exist M\"obius transformations $\eta$ and $\nu$ 
such that $W = \nu \circ X \circ \eta$. Thus, after adjusting the maps appearing in 
the formulation of Theorem~\ref{t6} in an obvious way, we obtain the statement of the theorem. \qed

The following statement is well known. For the reader's convenience we provide a sketch of the proof.

\bp \l{32} Let $X$ be a rational function of degree at least two such that for every critical value $c$ of $X$ the multiplicities of $X$ at the preimages of $c$ are equal. Then $X$ is a Galois covering. 
\ep
\pr
Let $z_i$, $1 \le i \le r$, be the critical values of $X$, and let $n$ be its degree. By assumption, 
for each $i$ there exists a number $d_i$ such that 
$X^{-1}(\{z_i\})$ consists of $\frac{n}{d_i}$ points, at each of which the multiplicity 
of $X$ equals $d_i$. 
Applying the Riemann--Hurwitz formula, we obtain
\[
-2 = -2n + \sum_{i=1}^{r} \frac{n}{d_i}(d_i - 1),
\]
which simplifies to
\be \l{fi} 
2 + \sum_{i=1}^{r} \left(\frac{1}{d_i} - 1\right) = \frac{2}{n}.
\ee

The last formula implies that $X$ has at most three critical values and imposes additional restrictions on the values of $d_i$ and $n$. Rational functions with three critical values are called Belyi functions and can be represented by special graphs known as dessins d'enfants. Using this fact, a direct analysis of the possible graphs matching \eqref{fi} shows that they all correspond to Galois coverings (see Corollary~4.4 in~\cite{js} and the remark that follows). \qed

We finish this section with the following result, which describes good solutions of the equation $A \circ X = Y \circ B$ in rational functions in the case where, instead of $X$, the map $B$ is a Galois covering. It reduces the study of such solutions to the case $B = X$, 
which expresses the semiconjugacy relation between rational functions that has been intensively studied recently. For a comprehensive theory of semiconjugate rational functions, we refer the reader to \cite{semi}, \cite{rec}, and \cite{lattes}.

\bt \l{t8} Let $X, Y, A, B$ be rational functions of degree at least two such that
 $B$ is a Galois covering. 
Then $X, Y, A, B$ form a good solution of the equation $A \circ X = B \circ Y$
if and only if there exist M\"obius transformations $\eta$ and $\nu$ such that 
the diagram
\[
\begin{CD}
\C\P^1 @>V >> \C\P^1 @>\eta>> \C\P^1 \\
@VV X V @VV X V @VV B V \\
\C\P^1 @>C>> \C\P^1 @>\nu >> \C\P^1  
\end{CD}
\]
 commutes,  
the equalities 
$
Y = \eta \circ V,$ $ A =  \nu \circ C$ hold, and the function $V$ is $G_X$-equivariant. 
\et 
\pr The ``if'' part follows from Theorem \ref{phi}. To prove the ``only if'' part, we observe that 
since $B$ is a Galois covering, for every critical value $\tilde c$ of $B$ the multiplicities of $B$ at the preimages of $\tilde c$ are equal. Therefore, since  $Y$ maps fibers of $X$ to fibers of $B$, applying Abhyankar's lemma we see that for every critical value $c$ of $X$ the multiplicities of $X$ at the preimages of $c$ are also equal. Thus $X$ is a Galois covering by Proposition~\ref{32}. 
Furthermore, part~(1) of Theorem~\ref{t7} implies that, since $B$ is a Galois covering,    the maps $U$ and $D$ have degree one, which yields the statement of the theorem. \qed

\subsection{Cofibred products of rational functions sharing a critical point
}
In this section, we adapt several results on functional equations in formal power series from \cite{form} to the setting of rational functions. Although studying such equations in the former setting is considerably easier—since, roughly speaking, formal power series are far more abundant than rational functions—some of the techniques from \cite{form} still apply in the present context.

The idea of our approach is as follows. Let $X, Y, A, B,$ and $H$ be rational functions such that
\be \l{e1} 
H = A \circ X = B \circ Y,
\ee
and suppose that $X$ and $Y$ share a critical point $z_0 \in \mathbb{CP}^1$.
Without loss of generality, we may assume that $z_0 = 0$ and
\be \l{e2} 
X(0)=Y(0)=A(0)=B(0)=H(0)=0,
\ee 
so that zero is a common fixed point of all the functions involved. Moreover, the multiplicities of $X$, $Y$, and $H$ at zero are at least two. Consequently, zero is a superattracting fixed point of these functions, and the Böttcher coordinate can be employed for local calculations.

Let us denote by $\mathcal{O}_0$ the local ring of germs of analytic functions at $0$, 
and by $\mathfrak{m}_0^k$ the ideal of $\mathcal{O}_0$ consisting of functions 
having a zero of multiplicity at least $k$ at $0$.  Finally, let $\mathcal G_0 = \mathfrak{m}_0^1 \setminus \mathfrak{m}_0^2$ be the group of germs of analytic functions at $0$ that vanish at $0$ and have non‑zero derivative, with group operation given by composition.  
We recall that for $H \in \mathfrak{m}_0^2$ having a zero of multiplicity $n$ at $0$, the corresponding B\"ottcher function is a function $\beta_H \in \mathcal G_0$ such that
\begin{equation}\label{a}
H \circ \beta_H = \beta_H \circ z^n.
\end{equation}
It is well known that such a function exists and is uniquely determined up to the transformation
\[
\beta_H \longrightarrow \beta_H \circ (\nu z), \qquad \nu^{\,n-1}=1,
\]
(see, e.g., \cite{mil}).

Further, let us define a {\it transition function} $\phi_H$ for $H$ as a function $\phi_H  \in \mathcal G_0$ such that
\begin{equation}\label{c}
H \circ \phi_H = H.
\end{equation}
It is clear that such functions form a group, which we denote by $\Gamma_H$. 
Finally, let us denote by $U_n$ the group of $n$th roots of unity.

The following lemma is an easy corollary of the existence of a B\"ottcher function. For further related material, see \cite{hs}, Section 2, and \cite{form}, Section 3.

\bl \l{b1} Let $H \in \mathfrak{m}_0^2$ have a zero of multiplicity $n$ at zero, and let $\beta_H$ be a B\"ottcher function for $H$. Then
\begin{equation}\label{fina}
\Gamma_H = \left\{ \beta_H \circ \v z \circ \beta_H^{-1} \ \big| \ \v \in U_n \right\}.
\end{equation}

\el 
\pr It follows from equality \eqref{a} that for every $\v\in U_n$ we have  
$$H\circ \beta_H=H\circ \beta_H \circ \v z,$$  implying that 
$$H=H\circ (\beta_H \circ \v z\circ \beta_H^{-1}).$$

On the other hand,  if equality  \eqref{c} holds, then conjugating its parts by $\beta_H$, we obtain  
$$z^n\circ (\beta_H^{-1} \circ  \phi_H\circ \beta_H)=z^n,$$ implying easily that 
$$\beta_H^{-1} \circ  \phi_H\circ \beta_H=
\v z$$ for some $\v\in U_n$. \qed

\bl\l{tt1} 
Let $X$ and $Y$ be rational functions of degree at least two belonging to $\mathfrak{m}_0^2$. Assume that  
$
\mathbb{C}(X)\cap\mathbb{C}(Y)\neq \mathbb{C}.
$ 
Then any transition functions $\phi_X$ and $\phi_Y$ commute.
\el
\pr Let $A, B,$ and $H$ be rational functions such that equalities \eqref{e1} and \eqref{e2} hold. Then \eqref{e1} implies
\[
H \circ \phi_X = A \circ X \circ \phi_X = A \circ X = H,
\]
so that $\phi_X$ is a transition function for $H$. Similarly, $\phi_Y$ is a transition function for $H$. By Lemma \ref{b1}, any two transition functions for $H$ commute. Hence, $\phi_X$ and $\phi_Y$  commute. \qed 

Note that Lemma~\ref{tt1} is a particular case of Theorem~1.3 in~\cite{form}, which states that for formal power series $X,Y$ of order at least two, the condition that $\phi_X$ and $\phi_Y$ commute is both necessary and {\it sufficient} for the existence of formal power series $A,B$ such that $A\circ X = B\circ Y$.

The next lemma  is also a version of a result from~\cite{form}, specifically Corollary~4.4 therein.

\bl\l{tt2}
Let $X$ and $Y$ be rational functions of degree at least two belonging to $\mathfrak{m}_0^2$.  Assume that  $ \Gamma_X \cap \Gamma_Y\neq \{z\}.$ Then 
$
\mathbb{C}(X,Y) \neq  \mathbb{C}(z)$. 
\el
\begin{proof} If $\C(X, Y) = \C(z)$, then there exist $U, V \in \mathbb{C}[x, y]$ such that
\[
z = \frac{U(X, Y)}{V(X, Y)}.
\]
Thus,  $\phi \in \Gamma_X \cap \Gamma_Y$ implies 
\[
\phi = \frac{U(X \circ \phi, Y \circ \phi)}{V(X \circ \phi, Y \circ \phi)}
       = \frac{U(X, Y)}{V(X, Y)}
       = z. \qedhere
\]

\end{proof}

\noindent {\it Proof of Theorem \ref{t2}.}  
Since the rational functions defined by \eqref{fde0} and \eqref{fde} 
satisfy \eqref{h}, to prove the ``if'' part it suffices to show that 
the assumptions of the theorem imply \(\mathbb{C}(z^n,Y) = \mathbb{C}(z)\).

Suppose that
\[
\mathbb{C}(z^n,Y) = \mathbb{C}(W)
\]
for some rational function $W$ of degree at least two. Then there exist rational functions $U$ and $V$ such that
\[
z^n = U \circ W, \qquad \sigma \circ z^s R(z^n) = V \circ W.
\]
The first equality implies easily that
\[
\mathbb{C}(z^n,Y) = \mathbb{C}(z^d)
\]
for some positive divisor $d$ of $n$. The second equality then implies that $d \mid s$, 
which contradicts the assumption that $\gcd(s,n) = 1$.

The ``only if'' part follows from Lemmas \ref{tt1} and \ref{tt2}. Indeed, for \(X=z^n\), \(n\geq 2\), the group \(\Gamma_X\) is obviously
\[
\Gamma_X = \{\v z \mid \v\in U_n\}. 
\]
 Thus, by Lemma \ref{tt1}, the condition
\[
\mathbb{C}(z^n) \cap \mathbb{C}(Y) \neq \mathbb{C}
\]
yields that any \(\phi_Y\) commutes with \(\varepsilon z\), where \(\varepsilon\) is a primitive \(n\)th root of unity. Therefore,
\[
Y\circ \varepsilon z\circ \phi_Y = Y\circ \phi_Y\circ \varepsilon z = Y\circ \varepsilon z, 
\]
implying that \(\phi_Y\) is a transition function for the rational function \(Y\circ \varepsilon z\).  
Since \(Y\) is indecomposable, it follows now from Lemma \ref{tt2} that 
\begin{equation}\label{if}
Y\circ \varepsilon z = \mu\circ Y
\end{equation}
for some Möbius transformation \(\mu\).  

Iterating this equality gives
\[
Y\circ \varepsilon^k z = \mu^{\circ k}\circ Y, \quad k\geq 1.
\]
Hence \(\mu\) has finite order dividing \(n\). It follows that there exists a Möbius transformation \(\sigma\) and an integer \(s\geq 0\) such that
\[
\mu = \sigma\circ \varepsilon^s \circ \sigma^{-1}.
\]
Substituting into \eqref{if}, we see that the rational function
\[
\widehat{Y} = \sigma^{-1}\circ Y
\]
satisfies
\[
\widehat{Y}\circ \varepsilon z = \varepsilon^s \circ \widehat{Y}, 
\]
implying easily  that
$
\widehat{Y} = z^s R(z^n)
$
for some rational function \(R\). In addition, \(\gcd(s,n)=1\), because
\[
\mathbb{C}(z^n,Y) = \mathbb{C}(z^{\gcd(s,n)}).
\eqno{\Box}\]

\subsection{\l{ex} Some examples} 
In this section we show that for the rational function 
\[
X=z+\frac{1}{z}
\]
there exist infinitely many indecomposable rational functions $Y$ which are 
neither $G_X$-equivariant nor Galois coverings such that 
\be \label{ip}
\mathbb{C}(X)\cap\mathbb{C}(Y)\neq \mathbb{C}.
\ee 
In particular, for such functions \eqref{us2} fails to hold.

Let us set 
\[
Y=\frac{1-z^l}{z^{l+m}-1}, \qquad
B=z^l(z+1)^m,
\]
where $l$ and $m$ are natural parameters. Then for the composition 
\[
(B\circ Y)(z)
=
z^{lm}
\left(\frac{1-z^l}{z^{l+m}-1}\right)^l
\left(\frac{z^m-1}{z^{l+m}-1}\right)^m
\]
the equality 
\[
(B\circ Y)\circ \frac{1}{z}= B\circ Y
\]
holds. Since $\frac{1}{z}$-invariant rational functions form a subfield of $\mathbb{C}(z)$ generated by $X$, this implies that there exists a rational function $A$ such that 
\be \label{ss}
A\circ X=B\circ Y.
\ee 
Thus, for any choice of parameters, for the functions
\[
X = z + \frac{1}{z}, \qquad Y = \frac{1 - z^l}{z^{l+m} - 1},
\]
condition \eqref{ip} is satisfied.

Let us now observe that, for infinitely many values of the parameters 
$l$ and $m$, the function $Y$ is indecomposable, yet neither 
$G_X$-equivariant nor a Galois covering. Indeed, setting $l = 1$ 
and $m = p$, where $p \ge 3$ is a prime, we see that

\begin{equation}\label{gi}
Y = \frac{1-z}{z^{p+1}-1} = \frac{1}{1 + z + \cdots + z^{p}}.
\end{equation} 
Thus, $\deg Y = p$, implying that $Y$ is indecomposable.   
Furthermore, $Y$ is not a Galois covering, since its deck transformation group $G_Y$ is trivial.  
Indeed, any element $\delta \in G_Y$ permutes the points in each fiber of $Y$. 
Applying this to $Y^{-1}(\infty)$, and taking into account that two circles cannot have more than two common points unless they coincide, it follows that $\delta$ fixes the unit circle $S^1$, and thus has the form $\delta(z)=cz$, where $|c|=1$. 
Nevertheless, one can easily see that for such $\delta$ the equality $Y \circ \delta = Y$ is impossible.

Finally, $Y$ is not $G_X$-equivariant. Indeed, since
\[
Y \circ \frac{1}{z} = z^p \cdot Y,
\]
the equality
\[
Y \circ \frac{1}{z} = \delta \circ Y,
\]
where $\delta$ is a M\"obius transformation,
would imply that
\[
z^p \cdot Y = \delta \circ Y.
\] 
Hence,
$
z^p = \mu \circ Y
$ 
for the rational function $\mu=\delta/z$. In view of the equality of degrees
on both sides, $\mu$ must be a M\"obius transformation. However,
this would imply that $G_Y$ is a cyclic group of order $p$,
which contradicts the argument presented above.

In conclusion, let us mention that for $X$ of degree two, the problem of 
describing rational functions $Y$ satisfying \eqref{ip} is closely related 
to the problem of  describing solutions of the functional equation
\begin{equation}\label{so1}
B \circ X = B \circ Y
\end{equation}
in rational functions.  

Indeed, if $X$ is a rational function of degree two and $\mu \neq z$ is 
an automorphism of $\mathbb{CP}^1$ such that
$
X \circ \mu = X,
$ 
then equality \eqref{ss} implies that
\begin{equation}\label{so2}
B \circ Y = B \circ (Y \circ \mu),
\end{equation} 
providing a solution of \eqref{so1}.   
On the other hand, if the algebraic curve
\[
\frac{B(x) - B(y)}{x-y}
\]
is irreducible, then by a result of \cite{ent}, any solution of \eqref{so1} 
in rational functions has the form \eqref{so2} for some involution 
$\mu \in \Aut(\mathbb{CP}^1)$. Thus, $B\circ Y$ is $\mu$-invariant, implying that \eqref{ss} holds for 
some rational functions $A$ and $X$ with $\deg X = 2$ and $G_B=\{e,\mu\}$.   

Since, for a given $B$, the condition that \eqref{so1} has no solutions 
in rational or meromorphic functions on $\mathbb{C}$ allows one to solve some  
non-trivial problems related to the geometry and dynamics of $B$ (see, e.g., 
\cite{tame}, \cite{plo}), the description of rational solutions of \eqref{so1} is of 
significant importance. However, the problem remains largely open. For some 
partial results, we refer the reader to \cite{az}, \cite{alg}, \cite{ent}, 
\cite{r4}, \cite{seg}.


\end{document}